\documentclass[11pt]{amsart}[draft]
\usepackage{graphicx, overpic}
\usepackage[below]{placeins}
\usepackage[colorlinks=true, linkcolor=blue, citecolor=blue]{hyperref}
\usepackage[]{algorithm2e}

\usepackage{graphicx} 
\usepackage{tikz} 
\usepackage{tikz-cd} 

\usepackage{thmtools}
\usepackage{thm-restate}
\usepackage{hyperref}

\usepackage{cleveref}

\usepackage[T1]{fontenc}

\usepackage{amsmath,amsthm,amscd,amssymb,eucal}

\usepackage{enumerate, amsfonts, latexsym, color, url}
\usepackage{epstopdf}

\usepackage{pinlabel}

\makeatletter
\newsavebox{\@brx}
\newcommand{\llangle}[1][]{\savebox{\@brx}{\(\m@th{#1\langle}\)}%
  \mathopen{\copy\@brx\kern-0.5\wd\@brx\usebox{\@brx}}}
\newcommand{\rrangle}[1][]{\savebox{\@brx}{\(\m@th{#1\rangle}\)}%
  \mathclose{\copy\@brx\kern-0.5\wd\@brx\usebox{\@brx}}}
\makeatother

\begin{document}

\newtheorem{theorem}{Theorem}[section]
\newtheorem{lemma}[theorem]{Lemma}
\newtheorem{proposition}[theorem]{Proposition}
\newtheorem{corollary}[theorem]{Corollary}
\newtheorem{conjecture}[theorem]{Conjecture}
\newtheorem{question}[theorem]{Question}
\newtheorem{problem}[theorem]{Problem}
\newtheorem*{claim}{Claim}
\newtheorem*{criterion}{Criterion}
\newtheorem*{universal_circle_theorem}{Universal Circle Theorem~\ref{theorem:universal_circle}}
\newtheorem*{quasigeodesic_theorem}{Quasigeodesic Flow Theorem~\ref{theorem:zippers_from_flows}}
\newtheorem*{quasimorphism_theorem}{Uniform Quasimorphism Theorem~\ref{theorem:zippers_from_quasimorphisms}}
\newtheorem*{order_theorem}{Uniform Order Theorem~\ref{theorem:zippers_from_orders}}

\theoremstyle{definition}
\newtheorem{definition}[theorem]{Definition}
\newtheorem{construction}[theorem]{Construction}
\newtheorem{notation}[theorem]{Notation}
\newtheorem{object}[theorem]{Object}
\newtheorem{operation}[theorem]{Operation}

\theoremstyle{remark}
\newtheorem{remark}[theorem]{Remark}
\newtheorem{example}[theorem]{Example}

\numberwithin{equation}{subsection}

\newcommand\id{\textnormal{id}}

\newcommand\N{\mathbb N}
\newcommand\Z{\mathbb Z}
\newcommand\R{\mathbb R}
\newcommand\C{\mathbb C}
\newcommand\EE{\mathcal E}
\renewcommand\H{\mathbb H}
\newcommand\A{\mathcal A}
\newcommand\F{\mathcal F}
\newcommand\G{\mathcal G}
\newcommand\CC{\mathcal C}
\newcommand\QQ{\mathcal Q}
\newcommand\Sp{\textnormal{Sp}}
\newcommand\SL{\textnormal{SL}}
\newcommand\Homeo{\textnormal{Homeo}}
\newcommand\inte{\textnormal{int}}
\newcommand\un{\textnormal{univ}}
\newcommand{\red}{\textcolor{red}}

\newcommand{\Lp}{\Lambda^{+}}
\newcommand{\Lm}{\Lambda^{-}}
\newcommand{\Lpm}{\Lambda^{\pm}}
\newcommand{\deffont}[1]{\emph{#1}}

\title{Infinite chains of perfect fits for Expanding Thurston maps}

\author{Ino Loukidou}
\address{University of Chicago \\ Chicago, Ill 60637 USA}
\email{thelouk@uchicago.edu}
\date{\today}

\begin{abstract}
   
The topological mating of two postcritically finite polynomials with dendritic
Julia sets is encoded by a pair of circle laminations $\Lambda^{\pm}$, 
whose collapse produces a sphere-filling curve. When a leaf of
$\Lambda^{+}$ and a leaf of $\Lambda^{-}$ share an endpoint they form a \emph{perfect fit}.
An unpublished proposition of Epstein,
recorded by Petersen and Meyer, shows that for matings of honest degree-$d$
polynomials an infinite-diameter ray equivalence class, i.e. an infinite chain of perfect fits is impossible. 
We show that this finiteness is a genuinely holomorphic
phenomenon. Allowing the dynamics to carry a periodic critical orbit, 
we construct \emph{combinatorially} expanding Thurston maps---realized by no expanding rational map---that admit invariant sphere-filling curves 
yet whose laminations $\Lambda^{\pm}$ contain infinite chains of perfect fits, in
fact infinitely many of them. 

\end{abstract}

\maketitle
\setcounter{tocdepth}{1}
\tableofcontents

\newcommand{\crit}{\operatorname{crit}}
\newcommand{\post}{\operatorname{post}}

\section{Introduction}

\emph{Mating}, introduced by Douady and Hubbard \cite{Douady}, combines two
polynomials of the same degree $d$ into a single dynamical system on the sphere.
In its most transparent form it applies to two postcritically finite polynomials
$f_{+},f_{-}$ whose Julia sets $J_{\pm}$ are \emph{dendrites}. Each $J_{\pm}$ is
a quotient of the closed disk by a lamination; equivalently, the action of
$f_{\pm}$ on $J_{\pm}$ is a factor of the map $h\colon z\mapsto z^{d}$ of the
circle under an invariant equivalence relation, or \emph{laminar relation},
$\Lambda^{\pm}$ of $S^{1}$. To \emph{mate} $J_{+}$ and $J_{-}$ is to glue the two
disks along their boundary circles by $z\mapsto\bar z$ and to collapse both
laminations at once, i.e.\ to form the quotient of $S^{1}$ by the equivalence
relation generated by $\Lp$ and $\Lm$ together. When the quotient is a
topological sphere one says that $J_{+}$ and $J_{-}$ have been
\emph{topologically mated}.

The relation generated by $\Lp$ and $\Lm$ can be intricate. A nontrivial pair of
points of $S^{1}$ identified by $\Lambda^{\pm}$ is a \emph{leaf}; when a leaf of
$\Lp$ and a leaf of $\Lm$ share an endpoint they form a \emph{perfect fit}. A
sequence of leaves $\ell_{1},\ell_{2},\dots,\ell_{m}$, drawn alternately from
$\Lp$ and $\Lm$, in which one endpoint of $\ell_{i}$ is an endpoint of
$\ell_{i+1}$, is a \emph{chain of perfect fits}; the \emph{diameter} of an
equivalence class is the length of the longest chain of perfect fits it
contains, and the mating has finite diameter if these lengths are uniformly
bounded. These classes and their diameters are exactly the
\emph{ray-equivalence classes} of the polynomial-mating
literature \cite{Contradiction, Jung}: a leaf records two external rays landing at a common
point of a Julia set, a perfect fit records a point of $J_{+}$ glued to a point
of $J_{-}$, and a chain of perfect fits is a path of extended external rays that
alternates between the two filled Julia sets.

Whether a topological mating exists---whether the quotient is a sphere---is
controlled by these classes. An unpublished proposition of A.~Epstein, recorded
by Petersen and Meyer \cite[Prop.~4.12]{Contradiction}, states that if some
ray-equivalence class is a cycle or has infinite diameter, then the quotient of
the (formal) mating is not homeomorphic to $S^{2}$; in other words, the
topological mating fails to exist. On the other hand, when the mating comes from
an \emph{expanding} Thurston map, Meyer \cite{ExpandingMaps,Quotients} shows that the
collapse of $\Lpm$ \emph{is} a genuine sphere-filling curve
$\gamma\colon S^{1}\to S^{2}$, so the quotient is a sphere. Taken together, the
two results say that for matings of honest degree-$d$ dendritic Julia sets---the
Misiurewicz case, equivalently postcritically finite \emph{expanding rational}
maps---every chain of perfect fits is finite: infinite chains are forbidden by
the holomorphic structure.

This raises the question of whether finiteness is intrinsic to the combinatorial
construction---collapsing a pair of $h$-invariant laminations to a sphere-filling
curve---or a feature of the holomorphic setting in which Epstein's proposition is
proved. Petersen and Meyer establish Proposition 4.12 for honest polynomials,
using their external rays and Carath\'eodory loops, which prompts:

\begin{question}\label{mainquestion}
Does Proposition 4.12 remain true when the map $f$ is not rational? That is, must
the ray-equivalence classes of a non-rational expanding Thurston map still have
finite diameter?
\end{question}

We note that examples of matings that produce infinite ray equivalence classes are known to exist in the literature. In \cite[Section 5]{Jung}, Jung gives examples of matings of degree 2 polynomials which, when mated, produce an infinite ray equivalence class. The difference there is that in his examples the equivalence classes are cyclic and not closed, giving a quotient that is not homeomorphic to the sphere.

\subsection{Statement of results}

We answer Question \ref{mainquestion} in the negative: finiteness of chains of perfect fits is
special to the holomorphic setting.

\begin{theorem}\label{maintheorem}
There exist combinatorially expanding Thurston maps $f\colon S^{2}\to S^{2}$ that
admit an $f$-invariant sphere-filling curve $\gamma\colon S^{1}\to S^{2}$, given
by collapsing a pair of $h$-invariant laminations $\Lpm$ of $S^{1}$ that contain
infinite chains of perfect fits.
\end{theorem}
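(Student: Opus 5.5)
The proof is by explicit construction. I would build $f$ from a periodic critical orbit together with an invariant Peano curve, and then read the laminations off the curve.

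\textbf{Step 1: the map.} Start with a combinatorially expanding Thurston map $f$ of degree $d$ on the pillowcase, or more generally on a polyhedral sphere, that carries an $f$-invariant Jordan curve $\CC\supset\post(f)$ and has a periodic critical point. The model I would take is a Latt\`es-type subdivision rule modified so that one critical point is fixed, for instance a $2\times 2$ or $3\times 3$ subdivision of the square pillowcase in which a corner is a critical fixed point. Such a map cannot be topologically conjugate to an expanding rational map, because an expanding rational map has no periodic critical points. It can nevertheless be combinatorially expanding in Meyer's sense: the number of $n$-tiles needed to join opposite sides of $\CC$ grows without bound.

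\textbf{Step 2: the curve.} Choose a pseudo-isotopy $H$ rel $\post(f)$ that deforms $\CC$ to $\CC^1=f^{-1}(\CC)$, traversing it in the way dictated by a connection graph of the $1$-tiles, so that the resulting approximating curves $\gamma^n$ are Jordan. Using Meyer's construction \cite{ExpandingMaps,Quotients}, combinatorial expansion gives convergence of $\gamma^n$ to an $f$-invariant sphere-filling curve $\gamma\colon S^1\to S^2$ semiconjugating $h(z)=z^d$ to $f$. Expansion is needed here only away from the periodic critical orbit. I would check that Meyer's contraction argument uses only the combinatorial expansion of tiles, together with the fact that the pseudo-isotopy is the identity on $\post(f)$. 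The laminations are then defined by $\Lp=\{s\sim t:\gamma(s)=\gamma(t)$ with identification on the white side$\}$, and $\Lm$ analogously on the black side. Both are $h$-invariant because $\gamma\circ h=f\circ\gamma$.

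\textbf{Step 3: infinite chains.} The key step is to exhibit an infinite chain. Let $c$ be the fixed critical point, of local degree $k\ge 2$. Its fiber $\gamma^{-1}(c)$ is an $h$-invariant finite set whose size grows with the number of tile-corners at $c$. Near $c$ the curve $\gamma^n$ passes through $c$ repeatedly, and the pseudo-isotopy can be arranged so that at each level the new $n$-tiles incident to $c$ produce additional visits. In that case the set of angles $\gamma^{-1}(c)$ is infinite, a countable set accumulating on a finite set of $h$-periodic angles. Leaves of $\Lp$ and $\Lm$ then join consecutive angles alternately, one white-side identification and one black-side identification between each successive pair of visits. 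This gives a chain $\ell_1,\ell_2,\dots$ with endpoints $t_1,t_2,\dots$ converging to a periodic angle $t_\infty$, which is an infinite chain of perfect fits. Taking preimages of this chain under $h$ at the non-periodic iterated preimages of $c$ produces infinitely many distinct infinite chains.

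\textbf{Main obstacle.} The hardest part will be reconciling two requirements. The fiber $\gamma^{-1}(c)$ must be infinite, which is where the periodic critical point is essential: $f$ is not locally expanding at $c$, so the local degree $k^n$ of $f^n$ lets the number of visits grow. At the same time, $\gamma$ must remain continuous and the equivalence classes must be closed, so that the quotient is still the sphere rather than the non-closed cyclic classes of Jung. To handle this I would first compute the combinatorics explicitly. I would write down the $1$-tile connection at $c$ and verify by induction that the number of visits $\#(\gamma^n)^{-1}(c)$ equals $1+(k-1)+\dots$, so that it grows linearly in $n$. I would then check that the corresponding angles converge to the periodic angle, so the closure adds only that single point and the class remains closed.
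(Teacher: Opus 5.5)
Your Step 3 has a genuine gap, and it carries the whole content of the theorem. You pass from ``the fiber over a periodic critical point $c$ is infinite'' to ``leaves of $\Lambda^+$ and $\Lambda^-$ join consecutive visits alternately.'' Nothing forces this alternation. At level $n$ the visits of $\gamma_n$ to $c$ are recorded by the decomposition element $[c]_n$. This is a tree of red and blue polygons on $\deg_{f^n}(c)=k^n$ points, so the growth is exponential, not linear. Its shape is dictated by the connection, and an infinite fiber is fully compatible with uniformly bounded chains.

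The paper's barycentric subdivision example already shows this. It has a fixed critical point with unbounded ramification, yet at every stage its decomposition elements are just large blue polygons joined by at most one red leaf. The simplest instance of your setup fails outright. Take a fixed critical point of degree $2$, so $[c]_1$ is a single leaf of one color. The pullback rule says $[a]_2$ is $[a]_1$ with $\deg_f(a)$ copies of $[f(a)]_1$ glued at its vertices, iterated along the orbit. Here it only ever glues copies of that same-colored leaf, so each $[c]_n$ is one monochromatic polygon with no perfect fit at all. Nor can you ``arrange the pseudo-isotopy'' level by level to create alternation. Once $\gamma_1$ (equivalently, the pair of majors) is fixed, every $\Lambda^{\pm}_n$, and hence every connection at $c$, is determined by pullback.

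What is missing is exactly this pullback description plus a color condition on the critical cycle. You need critical points $a,b$ on a periodic cycle with $[a]_1$ a leaf of $\Lambda^+$ and $[b]_1$ a leaf of $\Lambda^-$. Then each pullback along the cycle glues opposite-colored copies onto both ends of the existing chain. So $[a]_n$ contains an alternating chain of length at least $n$, and $[a]\supseteq\bigcup_n[a]_n$ contains an infinite one. This is why the paper does not use a fixed critical point. It uses an explicit degree-$8$ map with a period-$2$ cycle $\{1,\infty\}$ of degree-$2$ critical points, together with a level-$1$ circuit making $[1]_1$ blue and $[\infty]_1$ red. A fixed critical point of higher degree whose level-$1$ connection already mixes colors might also work, but you would have to choose that connection and prove the chain growth.

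Two smaller points:
\begin{itemize}
\item Closedness of the classes is automatic once $\gamma$ is a continuous surjection. Your claim that the closure adds only a single periodic angle is therefore both unneeded and unsupported.
\item Step 1 must produce a concrete two-tile subdivision rule, not an unspecified modification of a Latt\`es rule. You need to check that it is combinatorially expanding and admits a level-$1$ circuit with the required colors.
\end{itemize}
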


There is no conflict with Epstein's proposition, and seeing why is the point of
the construction. The first thing to say is that $S^{2}$ is not an abstract
quotient we must certify but the ambient sphere on which $f$ acts: the curve
$\gamma$ is built directly on it, as the limit of the approximating Eulerian
circuits $\gamma_{n}$ drawn on the $n$-tiles, and $\Lpm$ merely records its
fibers, so $S^{1}/{\sim}\;\cong\;S^{2}$ is automatic. Epstein's proposition, by
contrast, asks whether such a $\gamma$ can exist at all for honest polynomials,
and there an infinite chain of perfect fits is fatal---for reasons (honest
external rays and Carath\'eodory loops) that have no counterpart in our
combinatorial setting. What remains is to explain why our maps escape the
holomorphic finiteness. A postcritically finite polynomial with dendritic Julia
set is necessarily \emph{Misiurewicz}: all of its critical points are strictly
preperiodic. A \emph{periodic} critical point is impossible for a dendrite, since
it would create an attracting cycle and hence a nonempty Fatou set, whereas a
dendrite has empty interior. Our examples are dendritic only \emph{topologically}.
We allow the dynamics to carry an honest periodic critical orbit: the maps remain
postcritically finite branched covers---in our main example
$|\post(f)|=3$, so $f$ can even be realized by a rational map on $\mathbb{C} \mathbb{P}^{1}$---but
by no \emph{expanding} one, because a periodic critical point obstructs metric
expansion \cite{Book}. This is precisely why our maps are only
\emph{combinatorially} expanding, and precisely what places them outside the
hypotheses of Proposition 4.12.

The object being mated is then not an honest Julia set but a \emph{generalized
dendrite}: the topological quotient of the filled Julia set $K$ obtained by
collapsing, roughly, the iterated preimages of an invariant Hubbard tree. Away
from the critical orbit this looks like an ordinary dendrite, but at a periodic
critical point it is \emph{infinitely-valent}---its complement has infinitely
many components---because the ramification of $f^{n}$ at that point grows without
bound, accumulating infinitely many local branches. Equivalently, the
corresponding equivalence class of $\Lpm$ in $S^{1}$ contains infinitely many
points. For an honest dendrite this cannot occur: every fiber of the
Carath\'eodory semi-conjugacy is finite \cite[\S3.4]{Contradiction}, so every class is
finite, and hence so is its diameter. Infinite valence at a periodic critical
point is exactly the source of the infinite chains of perfect fits.

The mechanism is captured by a simple sufficient condition, deduced from a
description (Lemma \ref{lemma}) of how the decomposition element $[v]_{n}$ at a critical
point $v$ grows under pullback.


\begin{restatable}{cor}{CorAB}\label{a and b}
    Suppose $f$ has a critical cycle $C$ containing critical points $a,b$ with
$[a]_{1}$ a leaf of $\Lp$ and $[b]_{1}$ a leaf of $\Lm$. Then the equivalence
classes $[a],[b]$ contain infinite chains of perfect fits.
\end{restatable}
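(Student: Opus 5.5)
We work with the decomposition elements $[v]_n\subset S^1$, the level-$n$ approximations of the fibers of $\gamma$, so that $[v]=\bigcup_n [v]_n$ increases in $n$. The key input is Lemma \ref{lemma}. We read it as follows. If $v$ is a critical point with $f(v)=w$ and local degree $d_v\ge 2$, then $[v]_{n+1}$ is obtained from $[w]_n$ by pulling back under $h$. Each of the $d_v$ local preimage branches at $v$ contributes a copy of the corresponding chain of leaves, and these copies are glued at the points of $[v]_1$. We would use this only in the following concrete form. A chain of perfect fits of length $m$ in $[w]_n$ lifts to $d_v$ chains in $[v]_{n+1}$, and two consecutive lifts are joined through the leaf $[v]_1$.

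First we fix the parity bookkeeping. By hypothesis $[a]_1$ is a leaf of $\Lp$ and $[b]_1$ is a leaf of $\Lm$. Since $a$ and $b$ lie on the same critical cycle $C$, there are $p,q$ with $f^{p}(a)=b$ and $f^{q}(b)=a$, and $f^{p+q}(a)=a$. Pulling back along the orbit, Lemma \ref{lemma} gives inductively that $[a]_{k(p+q)+1}$ contains a chain of perfect fits $c_k$. We will arrange that $c_k$ has an endpoint lying on the leaf $[a]_1\in\Lp$.

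Next we show that the chain grows. We pull $c_k$ back once around the cycle, through $b$ and then back to $a$. The point of having both a $\Lp$-leaf at $a$ and a $\Lm$-leaf at $b$ is the alternation condition. At $b$, the lifted chains are concatenated through the $\Lm$-leaf $[b]_1$. At $a$, the lifted chains are concatenated through the $\Lp$-leaf $[a]_1$. So in both cases the concatenation of two lifts, each ending on a leaf of the opposite lamination, is again a legitimate alternating chain and not a backtrack. This yields $\operatorname{len}(c_{k+1})\ge 2\operatorname{len}(c_k)+1$, or at least $\operatorname{len}(c_{k+1})>\operatorname{len}(c_k)$. We also need $c_k\subset c_{k+1}$, which we get by choosing the lift along the identity branch, since $[a]_n\subset[a]_{n+1}$. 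The union $\bigcup_k c_k\subset[a]$ is then an infinite chain of perfect fits. The same argument, started at $b$ with the roles of $\Lp$ and $\Lm$ swapped, gives the statement for $[b]$.

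The main obstacle is the concatenation step. We must check that the glued chain has no repeated leaf and really alternates. Repeated leaves could collapse it to a cycle or a shorter chain. This requires knowing that the $d_a$ lifts of $c_k$ lie in distinct sectors of $S^1$ cut out by the leaf $[a]_1$, which follows from $h$ being injective on each sector. We also need the leaf of $c_k$ adjacent to the gluing point to belong to the lamination opposite to $[a]_1$. We would verify this by tracking which lamination the terminal leaf of each $c_k$ belongs to. Here the hypothesis that $a$ and $b$ carry leaves of different laminations is used: with only $\Lp$-leaves along $C$, two $\Lp$-leaves would meet at the gluing point and no perfect fit would be created.
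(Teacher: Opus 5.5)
Your proposal is correct and takes the paper's route: iterate Lemma~\ref{lemma} around the cycle, use the opposite colors of $[a]_1\in\Lp$ and $[b]_1\in\Lm$ so that every gluing creates a perfect fit, and conclude from $[a]_n\subseteq[a]$. You are more careful than the paper, which passes from chains of length $\ge n$ in $[a]_n$ to an infinite chain without nesting them, and which excludes closing-up via Lemma~\ref{simply connected} rather than by a sector/injectivity argument like yours.

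One correction to the length bookkeeping. The bound $\operatorname{len}(c_{k+1})\ge 2\operatorname{len}(c_k)+1$ contradicts your own invariant that $c_k$ ends at the gluing point on $[a]_1$, because the doubled chain has $[a]_1$ in its interior. So at each step you must keep a single lift together with the gluing leaf, which gives only linear growth. With that recipe, $c_k\subset c_{k+1}$ follows by induction from its monotonicity, not from an ``identity branch'', and your fallback $\operatorname{len}(c_{k+1})>\operatorname{len}(c_k)$ is what actually holds and suffices.
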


Along the cycle each pullback attaches a fresh copy of the opposite-colored leaf
to the current decomposition element (Lemma \ref{lemma}), so $[a]_{n}$ contains an
alternating chain of length at least $n$; in the limit $[a]$ contains an infinite
chain. Since every iterated preimage of such a point is again infinitely-valent,
in fact infinitely many equivalence classes contain infinite chains.

In Section \ref{example} we make this explicit (Figure \ref{The map}): a degree-$8$ combinatorially
expanding Thurston map $f$ with a period-$2$ cycle $\{1,\infty\}$ of two
degree-$2$ critical points, arranged so that $[1]_{1}$ is a single leaf of $\Lp$
and $[\infty]_{1}$ is a single leaf of $\Lm$ (Figures \ref{The curve}, \ref{1 and infty}). Corollary \ref{a and b} then
shows that $[1]$ and $[\infty]$ contain infinite chains of perfect fits; the
first five approximations of $[\infty]$ are drawn in Figure \ref{5 steps}.

\subsection{Context: sphere-filling curves}

The pair of laminations above is one instance of a much broader phenomenon.
Sphere-filling curves arise throughout dynamics and geometry as collapses of a
pair of laminations of $S^{1}$. The classical example is the Cannon--Thurston
map \cite{cannonthurston}: for a hyperbolic $3$-manifold fibering over the circle with
fiber $\Sigma$ and pseudo-Anosov monodromy, the stable and unstable laminations
lift to a pair $\Lambda^{\pm}$ of laminations of
$S^{1}=\partial\mathbb{H}^{2}$, and the induced quotient $S^{1}/\!\sim$ is a
sphere with quotient map a sphere-filling curve $\gamma$. Adding an entry to
Sullivan's dictionary, Meyer \cite{ExpandingMaps} showed that an expanding Thurston map
admitting an invariant Jordan curve carries an invariant sphere-filling curve
$\gamma\colon S^{1}\to S^{2}$, and \cite{Quotients} that $\gamma$ is again the
collapse of a pair of laminations $\Lpm$ of $S^{1}$---exactly the two laminations
one obtains by \emph{unmating} $f$ \cite{UnmatingMeyer}. When $f$ is an expanding
rational map the two pieces are honest polynomials with dendritic Julia sets;
in general they are only topological, as in this paper.

In the context of Sullivan's dictionary, it's also worth noting that finiteness and geometry are shown to be related in both the hyperbolic 3-manifolds and the expanding Thurston maps world.
In \cite{Fenley}, Fenley shows \emph{finiteness of Lozenges} for quasigeodesic pseudo-Anosov flows in hyperbolic 3-manifolds. In our language, this translates to chains of perfect fits in the orbit space of the flow having finite length, giving a clear analogy to \cite[Prop.~4.12]{Contradiction} about holomorphic expanding Thurston maps.

In \cite{CaTherineWheels}, Calegari and the author study a class of such curves, the
\emph{CaTherine wheels}, that arise in the same way. It is shown there that
Meyer's curves are CaTherine wheels precisely when the corresponding laminations
have \emph{no} perfect fits, and it is conjectured that in the presence of
perfect fits one obtains a broader class, the \emph{P-CaTherine wheels}, for which
most of the theory persists. The present examples---sphere-filling curves whose
laminations carry not merely perfect fits but infinite chains of them---lie at
the far end of this range.

\subsection{Outline of the paper} The paper is organized as follows.
Section \ref{background} covers the preliminaries on Expanding Thurston maps, connections and laminations of the circle. 
In section \ref{section3}, we discuss the process of \emph{unmating} of a Thurston map as described by Meyer in \cite{UnmatingMeyer}. To illustrate the process we introduce the example of the barycentric subdivision map (figure \ref{barycentric}), that appears in \cite[Chapter 12]{Book}.
In section \ref{pullback}, we prove the basic lemmas for describing decomposition elements and prove Corollary \ref{a and b}.
Finally, in section \ref{example}, we give an example of an Expanding Thurston map that, when unmated, it produces a pair of laminations that when collapsed they have a spherical quotient but they contain infinite chains of perfect fits, proving theorem \ref{maintheorem}.

\subsection{Acknowledgements}

I am deeply grateful to Danny Calegari for bringing this question to my attention, as well as for countless helpful discussions and his constant support. 
I would also like to thank Daniel Meyer for comments on a previous draft and for bringing to my attention further examples and different cases of matings of polynomials and
Lucas Kerbs for conversations about decomposition elements and laminations.

\section{Background}\label{background}

\subsection{Branched coverings of the sphere.}

Expanding Thurston maps are examples of branched covering maps of the sphere to itself.
Let $f:S^2 \to S^2$ be a branched covering.  
The degree $d$ of the map $f$ is its degree as a regular covering map away from the branch locus.
The set of \textit{critical points} of $f$, $\operatorname{crit}(f)$, is defined to be the branch locus of $f$ and the \textit{degree} of a critical point $c \in \crit(f)$ is the local degree of $f$ at $c$.
In particular, by the degree-genus formula we have that $$2+ \sum_{c \in \crit(f)}\left( d_f(c) - 1 \right ) = 2d$$
A \textit{postcritical point} of $f$ is an iterated image of a critical point of $f$. In particular,
$$\post (f): = \bigcup_{n \geq 1}\left \{ f^n(c) \ | \ c\in \crit(f) \right \}$$

A map $f$ is called \textit{postcritically finite} if the set $\post(f)$ is finite.
Equivalently, $f$ is postcritically finite if all of its critical points are preperiodic.

\begin{definition}
    A \textit{Thurston map} is a branched covering map $f:S^2\to S^2$ of degree $d \geq 2$ that is postcritically finite.
\end{definition}

There are no Thurston maps with $|\post| \leq 1$, and the only ones with $|\post| = 2$ are equivalent to $z \mapsto z^d$ for some $d$, thus our focus lies on maps with $|\post(f)|\geq 3$.

For an exposition to such maps, see \cite{Book}. The theorems that follow in this section are all stated and proved there.

\begin{definition}
    A Thurston map is called \textit{expanding} if there exists a Jordan curve $\mathcal{C} \subseteq S^2$ with $\post(f) \subseteq \mathcal{C}$ and the maximal diameter of a component of $S^2 \smallsetminus f^{-n}(\mathcal{C})$ goes to 0 as $n \to \infty$.

\end{definition}

Suppose that $f:S^2 \to S^2$ has $|\post(f)| = n$. To describe $f$ we will use a \textit{two-tile subdivision rule}.
\begin{definition}
    A two-tile subdivision rule for $S^2$ is a triple $(D_1,D_0,L)$ of cell decompositions $D_0,D_1$ of $S^2$ and an orientation-preserving labeling $L:D_1\to D_0$, such that:
    \begin{enumerate}
        \item $D_0$ contains precisely two tiles (a \textit{white} and a \textit{black} one).
        \item $D_1$ is a refinement of $D_0$ and $D_1$ contains more than two tiles.
        \item If $k$ is the number of vertices in $D_0$, then $k \geq 3$ and every tile in $D_1$ is a $k$-gon.
        \item Every vertex in $D_1$ is contained in an even number of tiles in $D_1$.
    \end{enumerate}
\end{definition}
We will refer to the tiles of $D^1$ as white or black 1-tiles, depending on their image in $D_0$ under $L$.
 
We say that $f$ \textit{realizes} the two tile subdivision rule if $f$ is cellular for $(D_0,D_1)$ and $f(\tau) = L(\tau)$ for each $\tau\in D_1$.

Let $J$ be the common boundary of the two 0-tiles.

\begin{proposition}
    Suppose $(D_0,D_1,L)$ is a two-tile subdivision rule on $S^2$. Then there exists a Thurston map $f:S^2\to S^2$ that realizes $(D_0,D_1,L)$. 
    Moreover, the Jordan curve $J$ is $f$-invariant and contains the set $\post(f)$.
\end{proposition}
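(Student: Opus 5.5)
The plan is to build $f$ by hand from the combinatorics of $(D_0,D_1,L)$: first on the $1$-skeleton, then on the tiles one at a time, and finally check that the critical points and postcritical points are the expected ones. Let $V_0$ be the $k\ge 3$ vertices of $D_0$, lying on $J$ and cutting it into $k$ edges. Orient $S^2$ and, for each $1$-tile $\tau$, let $L|_\tau$ be the labeling sending the $k$ vertices and $k$ edges of $\tau$ to those of the $0$-tile $L(\tau)$. The requirement that $L$ be orientation preserving and the coloring be consistent means the following: two $1$-tiles sharing an edge have opposite colors, and their labelings agree on the shared edge.

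\textbf{Step 1: the $1$-skeleton.} For each edge $e$ of $D_1$, fix a homeomorphism $e\to L(e)$ that is compatible with $L$ on the endpoints. Since $L$ agrees on shared edges, this gives a well-defined continuous map $f_1$ from the $1$-skeleton of $D_1$ onto $J$. For each tile $\tau$, the restriction $f_1|_{\partial\tau}\colon \partial\tau\to\partial L(\tau)=J$ is then an orientation-preserving homeomorphism.

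\textbf{Step 2: extend over tiles.} By the Schoenflies theorem, each closed $1$-tile $\tau$ and each $0$-tile is a closed Jordan domain. So $f_1|_{\partial\tau}$ extends, by an Alexander-trick type extension, to a homeomorphism $\tau\to L(\tau)$. These extensions agree on overlaps, since overlaps lie in the $1$-skeleton, and together they give a continuous, cellular map $f\colon S^2\to S^2$ with $f(\tau)=L(\tau)$.

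\textbf{Step 3: $f$ is a branched cover.} Away from the $1$-skeleton, $f$ is a local homeomorphism. At an interior point of an edge, the two adjacent tiles have opposite colors, so they map to opposite sides of $J$ and $f$ is again a local homeomorphism. At a vertex $v$ of $D_1$, condition (4) says $v$ lies in $2m$ tiles. These alternate in color, and orientation preservation shows that $f$ near $v$ is conjugate to $z\mapsto z^m$. Hence $f$ is a branched covering with $\crit(f)$ contained in the vertices of $D_1$. Its degree is the number of white $1$-tiles, which is at least $2$ because $D_1$ has more than two tiles.

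\textbf{Step 4: invariance and postcritical finiteness.} By condition (2), $D_1$ refines $D_0$, so $J$ lies in the $1$-skeleton of $D_1$. Therefore $f(J)$ is contained in the image of the $1$-skeleton, which is $J$. The vertex set of $D_1$ contains $V_0$, again by refinement, and $f$ maps vertices of $D_1$ into $V_0$. So every critical value lies in $V_0$, and forward images of points of $V_0$ stay in $V_0$. This gives $\post(f)\subseteq V_0\subseteq J$, which is finite, so $f$ is a Thurston map realizing the rule.

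The main obstacle is Step 3 at the vertices. There one has to check carefully that orientation preservation, together with the alternating coloring, forces the cyclic order of the tiles around $v$ to wrap around the image vertex exactly $m$ times without folding. The other steps are routine gluing arguments.
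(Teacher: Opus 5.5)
Your proposal is correct and follows essentially the same route as the argument the paper defers to (it gives no proof of its own and cites Bonk--Meyer's book): define $f$ edge by edge on the $1$-skeleton of $D_1$, extend over each tile by the Alexander trick, check the local model $z\mapsto z^m$ at the vertices using the alternating colors forced by orientation preservation, and use that vertices of $D_1$ map into the vertices $V_0$ of $D_0$, with $V_0$ itself among the vertices of $D_1$, to get $\post(f)\subseteq V_0\subseteq J$. The one step worth stating explicitly is that the numbers of white and black $1$-tiles are equal, since both equal the well-defined degree of the branched cover; this equality is what turns ``more than two tiles'' into $\deg f\ge 2$.
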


In particular, in our examples the set of vertices of the tiles in $D_0$ is exactly the set $\post(f)$.

 \subsubsection{Connections at a point}\label{connections}

    Suppose that the map $f:S^2 \to S^2$ realizes the two-tile subdivision rule $(D_0,D_1,L)$.
    The union of the boundaries of the 1-tiles form a planar graph $\Gamma$ that can be two-colored and thus it admits an Eulerian circuit.
    In \cite{ExpandingMaps}, Meyer constructs a sphere-filling curve that is invariant under $f$.
    The construction is described inductively in steps.
    The first step is to choose an Eulerian circuit $\gamma$ that is the end of a \textit{pseudo-isotopy} that deforms the invariant curve $J$ into it (\cite{ExpandingMaps}, definition 3.2).
    In particular, this condition implies that $\gamma$ does not \textit{cross} itself and that it contains all white 1-tiles on one of its sides and all black ones on the other.

    This is equivalent to choosing a \textit{connection} for each vertex of the graph $\Gamma$.
    In particular, let $c \in \crit(f)$. If $\deg_f(c) = k$, $k$ pairs of black and white tiles meet at $c$.
    Keeping track of the way $\gamma$ traces the edges around $c$, we get a complementary non-crossing partition, in the sense of \cite{Quotients} and \cite{ExpandingMaps}, like in figure \ref{connections figure}.
    We will be referring the this partition as the \textit{connection} of $f$ at $c$.
    Note that if we want to be precise and draw exactly how $\gamma$ looks like, we will be drawing the graph of edges of $D_1$, which gives no information about the way the circuit traces it.
    Therefore, for the sake of visualization, we will be pulling our circuits off of themselves at a neighborhood of the vertices.
    \begin{figure}[h!]
        \centering
        \includegraphics[width=0.6\linewidth]{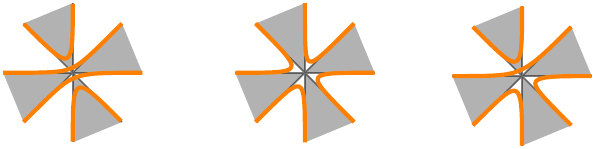}
        \caption{Examples of connections at a degree 4 critical point.}
        \label{connections figure}
    \end{figure}


    \subsubsection{Iterating $f$}

    Suppose $f$ is given by a two tile subdivision rule. Then, it is easy to see how $f^2$ behaves. 
    To each black/white 1-tile we can apply the subdivision rule again to obtain the black/white 2-tiles.
    Inductively, we obtain black/white $n$-tiles by subdividing the $(n-1)$-tiles by our rule.

    In the sequel we will see how we can obtain \textit{pullbacks} of the circuit $\gamma$ to create a sequence of Eulerian circuits $\gamma_n$ for the edge graph of the $n$-tiles (the $n$-th approximations of the invariant curve of \cite{ExpandingMaps}).
    Equivalently, we will describe how to choose a connection of n-tiles in a compatible way.
    For now, we can confidently say that if $\gamma_2$ were to be the pullback of $\gamma =: \gamma_1$, then the following should hold:
    If $f(b) = a \in \crit(f)$ is a critical point of degree $k$ and $b$ is not a critical point of $f$, then $b$ is a degree $k$ critical point of $f^2$ and the 2-tiles around $b$ are mapped by $f$ to the 1-tiles around $a$. 
    Since we already have a connection at $a$ for $\gamma_1$, we can imagine to simply ``pull it back'' to get a connection at $b$ for $\gamma_2$.
    On the other hand, choosing the connection for $\gamma_2$ at a critical point of $f^2$ that was already a critical point of $f$ can be more tricky, but we will see later how it is possible to make a consistent choice.

\subsection{Laminations} In this section we give some background on laminations of the circle $S^1$ that are invariant under a covering map $t \mapsto d\cdot t$ and the decompositions they define.

\begin{definition}
    A \textit{lamination} $\Lambda$ of $S^1$ is a closed subset of the space of unordered pairs of points in $S^1$ such that no two of its elements are linked in $S^1$.
\end{definition}

A \textit{leaf} $\lambda$ of a lamination $\Lambda$ is a pair $\{p,q\} \in \Lambda$. 
We wish to later use a pair of laminations of $S^1$ as data for identifying its points. 
Under the quotient map, points of $S^1$ that are endpoints of the same leaf are identified. Consequently, endpoints of consecutive leaves---leaves that share an endpoint with each other---are also identified, as well limits of sequences of endpoints of consecutive leaves.
Therefore we will be referring to the closures of sequences of consecutive leaves as \textit{decomposition elements} (see section \ref{decompositions} for details).
Note that if the laminations have finitely many leaves, there is no need to take any closures.

Sometimes it is convenient to alternate between thinking of a leaf as an unordered pair of points in $S^1$ or as a geodesic in $\H^2$, where $S^1 = \partial \H^2$.
In that case, instead of considering decomposition elements in $S^1$, it will be useful to consider their convex hulls in $\H^2$---that is, ideal polygons with vertices at the endpoints of the leaves. By abuse of notation, we will also call them decomposition elements.

    \subsubsection{Degree $d$ majors}
    In this section, we think of the circle $S^1$ as $\R / \Z$. We fix an integer $d>0$ and consider the map $h:S^1\to S^1$, $x \mapsto d\cdot x$.

    \begin{definition}
        A degree $d$ \textit{critical leaf} of multiplicity $n$ is an unordered $(n+1)$-tuple of distinct points $\{x_0,\dots,x_n\}$ for which $d(x_i-x_j) = 0 \mod \Z$ for all $i,j$.

        A (degree $d$) \textit{major} is a collection $C$ of disjoint degree $d$ critical leaves which are pairwise disjoint and have total multiplicity $d-1$.
    \end{definition}

    \begin{figure}[h!]
        \centering
        \includegraphics[width=0.7\linewidth]{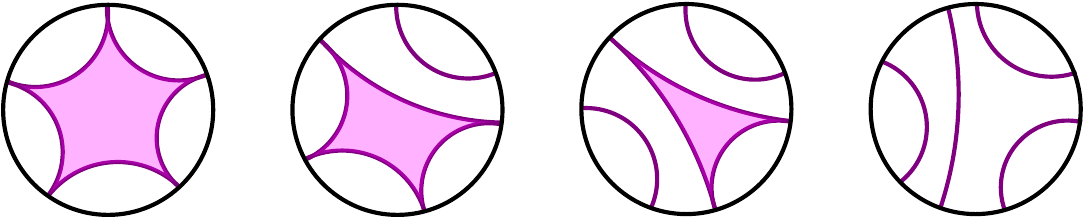}
        \caption{Examples of degree 5 majors. Note that all endpoints of the same leaf differ by a multiple of $e^{\frac{2 \pi i}5}$.}
        \label{fig:placeholder}
    \end{figure}

    A critical leaf is simple if it has multiplicity 1, and a major is simple if all its critical leaves are simple.
    A non-simple critical leaf of multiplicity $n$ may be thought of as a union of $n$ simple leaves, and a major may be thought of as a finite lamination. 

    If $\lambda: = \{x,y\}$ is a non-critical leaf, we define $h\lambda: = \{d \cdot x,d \cdot y\}$, and otherwise we define $h\lambda$ to be empty.

    \begin{definition}
        A lamination $\Lambda$ of $S^1$ is said to be $h$-\textit{invariant} if
        \begin{enumerate}
            \item for every non-critical leaf $\lambda$, the leaf $h\lambda$ is a leaf of $\Lambda$; and
            \item for every leaf $\lambda$ there are exactly $d$ leaves $\mu$ of $\Lambda$ with $h\mu = \lambda$, and these leaves are all disjoint.
        \end{enumerate}
    \end{definition}


    \begin{proposition}\label{pullback majors}
        For any $d>1$ and a degree $d$ major $C$ there is an invariant lamination $\Lambda$ containing $C$.

        Furthermore, suppose that there are no points $x,y\in S^1$ (not necessarily distinct) contained in leaves of $C$ and an $n$ such that $h^n(x) = y$. Then $\Lambda$ is unique.
    \end{proposition}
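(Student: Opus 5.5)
We must prove Proposition \ref{pullback majors}: for a degree $d$ major $C$ there is an $h$-invariant lamination $\Lambda \supseteq C$, and under the non-recurrence hypothesis it is unique. The plan is the classical pullback construction of Thurston, with $C$ playing the role of the critical data. The leaves of $C$ cut the closed disk, viewed as the convex hull of $S^1$ in $\H^2$, into $d$ complementary regions $R_1,\dots,R_d$. Because the total multiplicity is $d-1$ and every critical leaf identifies points with equal $h$-image, each $R_j$ meets $S^1$ in a finite union of arcs of total length $1/d$. Moreover, $h$ restricted to $\overline{R_j}\cap S^1$ is injective except at the endpoints glued by critical leaves, and it maps onto $S^1$ preserving cyclic order. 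This first step is a short combinatorial check: the counting uses the multiplicity condition, and the order-preservation uses that the leaves are pairwise disjoint.

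Next, we define the pullback operator on finite or closed laminations. Given a lamination $L$ with no leaf crossing $C$ in a bad way, set $\Phi(L)$ to be $C$ together with, for each leaf $\lambda=\{x,y\}\in L$ and each region $R_j$, the unique pair $\{x_j,y_j\}\subset \overline{R_j}\cap S^1$ with $h(x_j)=x$ and $h(y_j)=y$. Because $h|_{R_j}$ preserves cyclic order, unlinked leaves of $L$ lift to unlinked leaves in each $R_j$. Lifts lying in different regions cannot cross, since they are separated by leaves of $C$. Starting from $L_0=C$, the sequence $L_n=\Phi^n(L_0)$ is increasing, since $L_0\subseteq L_1$ and $\Phi$ is monotone. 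Each $L_n$ is unlinked, so the closure $\Lambda=\overline{\bigcup L_n}$ is a lamination, because the non-linking condition is closed. We then verify invariance. Forward invariance holds because $h$ maps every lift back to its source leaf. For the backward condition, each leaf has one lift in each of the $d$ regions, which gives exactly $d$ disjoint preimages; closure preserves both properties by continuity of the lifts in each region.

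The step I expect to be the main obstacle is the boundary case of the lifting, together with the counting in condition (2) of $h$-invariance. A leaf $\lambda$ may have an endpoint equal to the image of an endpoint of $C$. Its lift in $R_j$ can then be ambiguous, since two boundary points of $R_j$ are glued by a critical leaf, or it can coincide with a lift in a neighboring region. In that case the $d$ preimages might fail to be disjoint. My first attempt would be to resolve the ambiguity by a consistent convention, always choosing the lift on a fixed side of the critical leaf, and to check that linking is still avoided. For limit leaves I would approximate them by leaves of $\bigcup L_n$ and take limits of their lifts.

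For uniqueness, suppose no point of $C$ returns to a point of $C$ under iterates of $h$. Let $\Lambda'$ be any invariant lamination containing $C$. We would show by induction that $\Lambda'$ contains every $L_n$. Conversely, every leaf of $\Lambda'$ should be a limit of leaves of $\bigcup L_n$. The idea is to take a leaf $\mu\in\Lambda'$ not in $\Lambda$ and push it forward. Its images must avoid crossing the pullbacks of $C$, which become dense in the sense that the regions cut out by $L_n$ shrink. The hypothesis ensures that no lift ambiguity occurs, so the pullbacks of $C$ are well defined and eventually separate the endpoints of $\mu$ unless $\mu$ is approximated by them. The shrinking of these complementary arcs follows from expansion of $h$: each region of $L_n$ meets $S^1$ in arcs of total length at most $d^{-n}$, up to bounded combinatorics.
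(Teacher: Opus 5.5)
Your construction is the one the paper intends. The paper only cites Thurston et al.\ and describes $\Lambda$ as the closure of the iterated pullbacks $L_n$ of $C$. Your finite stages are fine: each $R_j$ carries arc length $1/d$, lifts are unlinked, and $L_n$ increases.

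\textbf{Existence: the closure step fails.} You flag this step, but it cannot be repaired the way you propose. Lifting into $R_j$ is \emph{not} continuous at a leaf $\lambda=\{x,y\}$ with $x=h(v)=h(w)$, where $\{v,w\}$ is a side of $C$ on $\partial R_j$. Leaves converging to $\lambda$ from its two sides have lifts converging to $\{v,y_j\}$ and to $\{w,y_j\}$ respectively, where $y_j$ is the preimage of $y$ in $\overline{R_j}$. A convention imposed on $\bigcup_n L_n$ does not control which of these the closure picks up, and two-sided accumulation puts both in. This really happens:
\begin{enumerate}
\item Take $d=2$ and $C=\{5/24,17/24\}$, which satisfies the non-recurrence hypothesis.
\item The leaves of $\bigcup_n L_n$ are diagonals of the pullbacks of the critical quadrilateral $\{5/24,7/24,17/24,19/24\}$ of $z^2+c$. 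Here $c\approx-1.5437$ is the real Misiurewicz parameter whose critical value receives the rays $5/12$ and $7/12$.
\item The diagonals at real precritical points on either side of $c$ accumulate on $\{5/12,7/12\}$ from both sides.
\item Their lifts to $R_1=(5/24,17/24)$ therefore converge both to $\{7/24,17/24\}$ and to $\{5/24,7/24\}$. The same happens in $R_2$, so $\overline{\bigcup_n L_n}$ has four leaves mapping onto $\{5/12,7/12\}$.
\end{enumerate}
Every invariant lamination containing this $C$ contains $\bigcup_n L_n$ (shown below), so none of them satisfies the ``exactly $d$'' condition. What your limiting argument does prove is the existential form of backward invariance used by Thurston. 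Take limits of the $d$ lifts along a single approximating sequence. These limits lie in distinct closed regions, are distinct (a shared side of $C$ maps to a point), and are unlinked. So every leaf has $d$ pairwise disjoint preimages, and existence holds in that form.

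\textbf{Uniqueness.} Your inclusion $\bigcup_n L_n\subseteq\Lambda'$ is correct, and this is exactly where the hypothesis enters. Endpoints of leaves of $L_n$ are iterated preimages of vertices of $C$, so none is a critical value. Hence each leaf of $L_n$ has one candidate lift per region, and the $d$ disjoint preimages in $\Lambda'$ must be exactly these. Your blanket claim that the hypothesis removes all lift ambiguity is false for limit leaves, as the example above shows.

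The converse inclusion is the real gap. Regions of $L_n$ do have total arc length $d^{-(n+1)}$, but their arcs can lie far apart. So your estimate only shows that a leaf of $\Lambda'\smallsetminus\Lambda$ is a diagonal of a gap of $\Lambda$, and such diagonals do occur:
\begin{enumerate}
\item Take $d=5$ and $C=\{\{k/4+1/40,\,k/4+9/40\}:k=0,\dots,3\}$, which satisfies the hypothesis.
\item Every leaf of $\overline{\bigcup_n L_n}$ has both endpoints in a single arc $[k/4,(k+1)/4]$. This holds because pullbacks into a cap region land in that cap, and the pullback into the central region maps each quarter-arc into itself.
\item Hence the fixed quadrilateral $\{0,1/4,1/2,3/4\}$ is a gap.
\item Adjoining the fixed leaf $\{0,1/2\}$ together with its iterated lifts gives a second lamination containing $C$, invariant in the same sense as $\overline{\bigcup_n L_n}$.
\end{enumerate}
So ``unique'' can only mean that the construction involves no choices. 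In that reading, $\overline{\bigcup_n L_n}$ is canonical and is the smallest invariant lamination containing $C$, which is exactly what your first inclusion proves.
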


    A proof of this proposition can be found in \cite{WhatsNext}, Theorem 4.3 or a sketch of it at \cite{CaTherineWheels}, Proposition 7.3.
    The proof is constructive, and the invariant lamination $\Lambda$ is obtained as a limit of finite approximations $\Lambda^{\pm}_n$. 
    The main idea is that to get $\Lambda_n$, we start with the major $C:= \Lambda_1$ and take iterated preimages of its leaves under the map $h$.

    Actually, later we will start with a \textit{pair} of degree $d$ majors $C^{\pm}$. 
    Following the previous procedure, they give rise to a sequence of pairs of finite laminations $\Lambda^{\pm}_n$ and consequently to a pair of $h$-invariant laminations $\Lambda^{\pm}$.

    In section \ref{mating}, we illustrate the approximations $\Lambda^{\pm}_1$ and $\Lambda^{\pm}_2$ for an example (figure \ref{1st approximation}, down right and figure \ref{2nd approximation}, left).
 

    \section{From connections to laminations}\label{section3}

    In the previous sections we talked about the approximations $\gamma_n$ of an $f$-invariant sphere-filling curve and the approximations $\Lambda^{\pm}_n$ to a pair of $h$-invariant laminations. 
    The rest of this paper, is dedicated to discussing connecions between those approximations.

    \subsection{Connections and Decomposition elements}\label{connections and decompositions}
    Consider the first approximation $\gamma_1$ of the $f$-invariant sphere filling curve $\gamma$, and let $c\in \crit(f)$ be a critical point.
    As discussed in section \ref{connections}, $\gamma_1$ is a loop that has two sides, a white and a black one, and bumps into itself at each critical point $c$, in particular, it passes through $c$ exactly $k = \deg_f(c)$ times.
    
    It is convenient now to see the loop $\gamma_1$ as a parametrization $\gamma_1:S^1 \to E$, where $E$ is the set of edges of $D_1$. 
    Every time $\gamma_1$ bumps into itself at $c$, i.e. there are $s,t \in S^1$ such that $\gamma_1(s) = \gamma_1(t) = c$, one of the two sides degenerates to a point.
    Draw a red leaf on $S^1$ joining $s$ and $t$ every time the white side vanishes and a blue leaf if the black side vanishes.
    This way, we end up having a collection of $k-1$ red and blue leaves corresponding to each critical point of order $k$, precisely described by the choice of connection at that point, like in figure \ref{Connections Laminations}. 
    These leaves combined make up a decomposition element for $\Lambda^{\pm}_1$, which we will be referring to as $[c]_1$.

    \begin{figure}[h!]
        \centering
        \includegraphics[width=0.56\linewidth]{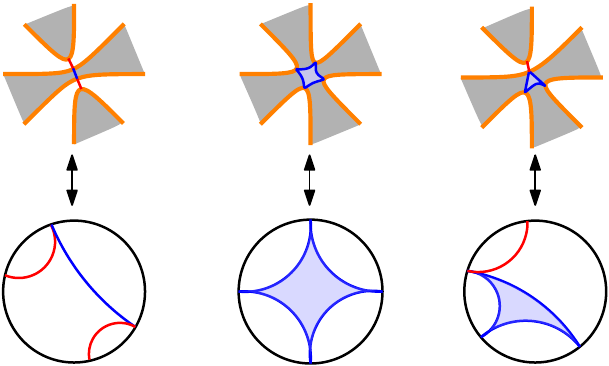}
        \caption{The choice of connection at a point precisely describes a collection of leaves and vice versa.}
        \label{Connections Laminations}
    \end{figure}

    Reversing the previous process, given a decomposition element consisting of $k-1$ leaves (a polygon of $m$ sides counts as a collection of $m-1$ leaves), we can draw the associated connection at the point $c$, also shown in figure \ref{Connections Laminations}.

     \subsection{Mating and unmating of polynomials}\label{mating}

     As stated in the introduction, the operation of mating of polynomials was introduced by Douady and Hubbard in \cite{Douady}. It is a way to combine two polynomials to create a rational map $\C \mathbb{P}^1 \to \C \mathbb{P}^1$. 
     Later, Meyer (\cite{Quotients}, \cite{UnmatingMeyer}, \cite{ExpandingMaps}) proves the opposite. Namely, given a rational map it is possible to unmate it to retrieve the data of two polynomials.
     This process actually works for general Expanding Thurston maps with some additional properties and uses strongly the machinery of laminations and decompositions of the sphere.
     The difference here is that, if the map is not rational, the unmating process may result to objects that are just topological, and don't correspond to polynomials with Julia set the whole sphere.
     In this section, we give a short exposition to the process of unmating an expanding Thurston map. 
     Proofs of all the claims can be found in \cite{Quotients}, sections 5 and 6.

    To illustrate the process we will use as running example the barycentric subdivision map $f$, described in figure \ref{barycentric}. 
    For simplicity, we will represent the sphere $S^2$ as a triangular pillow case, and many times we will add more copies of a fundamental domain, as appears in figure \ref{sphere}.
    \begin{figure}[h!]
        \centering
        \includegraphics[width=0.5\linewidth]{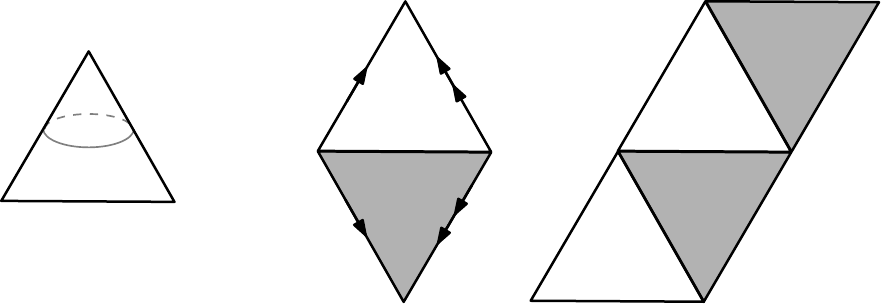}
        \caption{Depictions of the sphere $S^2$ as a gluing of two triangles. (Right) Part of the developing map.}
        \label{sphere}
    \end{figure}

    The map $f$ is discussed in \cite{Book}, chapter 12. It is a degree 6 combinatorially expanding Thurston map with $|\operatorname{post}(f)| = 3$. Even though $f$ can be represented by a holomorphic map, this can't be expanding since it contains a periodic critical point.
    \begin{figure}[h!]
        \centering
        \includegraphics[width=0.45\linewidth]{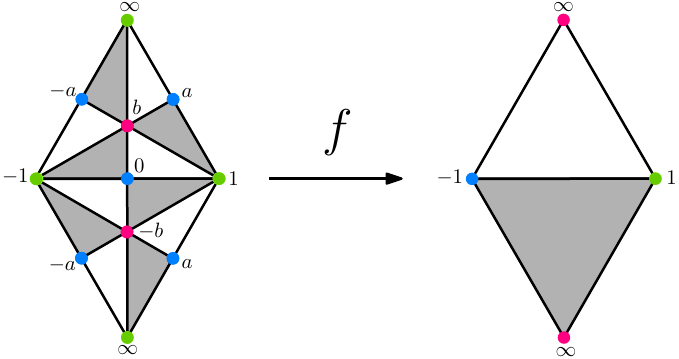}
        \caption{The barycentric subdivision map. The coloring of the vertices on the left indicates the point they map to under $f$.}
        \label{barycentric}
    \end{figure}

    We will now describe the process of unmating $f$ to obtain a pair of degree $d$ majors.

     \begin{enumerate}
         
         \item Draw the first approximation of an invariant Peano curve, as described in section \ref{connections}.
         Note that this curve may not be unique. In our example, we will work with the one that appears in figure \ref{1st approximation} (up left).

         \item Draw the two (finite) laminations implied by this approximation, as described in section \ref{connections and decompositions} (Figure \ref{1st approximation}, up right).
        
         \item Straighten the circle to get the combinatorial data of a pair of finite laminations of $S^1$. These will give us the appropriate pair of majors, and the leaves will be the critical leaves. (Figure \ref{1st approximation}, down left).

    \end{enumerate}

        Next, we're going to get \textit{coordinates} for the leaves we obtained.

    \begin{enumerate}

         \item [(4)] Write down the \textit{critical portrait} of $f$.
         In our example it is: 
         \begin{center}
             \begin{tikzcd}
a \arrow[rd, "2"]  &                        &                                                   \\
-a \arrow[r, "2"]  & -1 \arrow[rd, "2"]     &                                                   \\
0 \arrow[ru, "2"]  &                        & 1 \arrow["2", loop, distance=2em, in=325, out=35] \\
b \arrow[r, "3"]   & \infty \arrow[ru, "2"] &                                                   \\
-b \arrow[ru, "3"] &                        &                                                  
\end{tikzcd}
         \end{center}
         where the numbers above the arrows indicate the multiplicity of the critical point.
         Note that in this example it happens that $\post(f) \subseteq \crit(f)$, but this is not always the case.\footnote{Here, following the labeling of \cite{Book}, we named some of the points as ``$0$'', ``$1$'', ``$-1$'', ``$\infty$''. 
         There, those labels refer to an identification of $S^2$ with $\C\mathbb{P}^1$, but for us they are just names of points.}

         \item [(5)] Identify $S^1$ with $\R/\Z$ by choosing coordinates for the leaves we obtained and the postcritical points so that all leaves are critical leaves and they satisfy the same critical portrait as $f$ under the map $h(t) = d\cdot t$.
        In our example, a choice could be the following, where all numbers should be divided by 36:
            \begin{center}
                \begin{tikzcd}
{\{3,33\}} \arrow[rd]    &                             &                                                            \\
{\{15,21\}} \arrow[r]    & {\{12,18\}} \arrow[rd] &                                                            \\
{\{9,27\}} \arrow[ru]    &                             & {\{0,30\}} \arrow[ loop, distance=2em, in=325, out=35] \\
{\{7,19,31\}} \arrow[r]  & {\{6,24\}} \arrow[ru]  &                                                            \\
{\{1,13,25\}} \arrow[ru] &                             &                                                           
\end{tikzcd}
            \end{center}

            \item [(6)] Using the combinatorics of step 3 and the numbers of step 5, draw a pair of degree $d$ majors.
            For this pair, the dynamics of the map $t \mapsto d\cdot t$ acting on the leaves are the same as the dynamics of the map $f$ acting on its critical points (figure \ref{1st approximation}, down left).

            \begin{figure}[h!]
             \centering
             \includegraphics[width=0.56\linewidth]{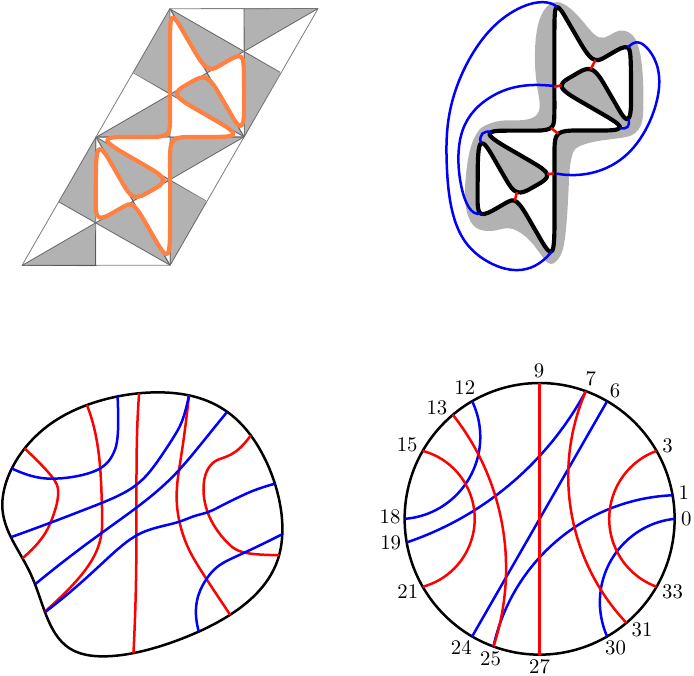}
             \caption{The steps described for obtaining a pair of degree 6 majors for the barycentric subdivision map.}
             \label{1st approximation}
         \end{figure}

    \end{enumerate}

    Under this construction, all the information of the first approximation $\gamma_1$ is enclosed in the information of the pair of majors.

    As Meyer showed in \cite{Quotients}, the data of the $n$-th approximation $\gamma_n$ is enclosed in the information of the $n$-th pullback of the pair of majors, $\Lambda^{\pm}_n$ in a similar way. In the limit, the information of the invariant sphere filling curve is given by the invariant laminations $\Lambda^{\pm}$. 

    \begin{figure}[h!]
        \centering
        \includegraphics[width=0.7\linewidth]{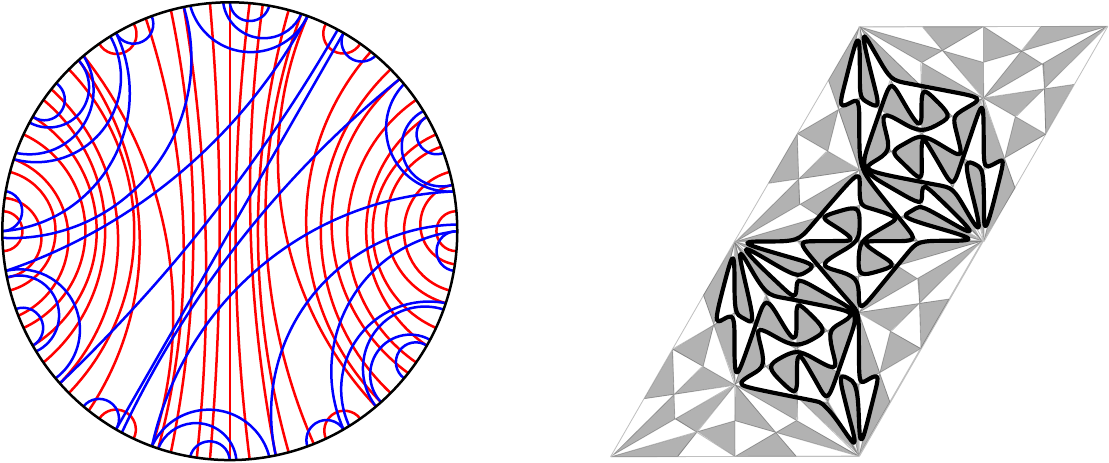}
        \caption{The laminations $\Lambda^{\pm}_2$ and the second approximation $\gamma_2$ to the invariant curve for $f$.}
        \label{2nd approximation}
    \end{figure}


\section{Decompositions for the invariant curve}\label{pullback}

In section \ref{mating} we described a process to get a pair of degree $d$ majors that precisely describe the first approximation $\gamma_1$ of an $f$-invariant sphere-filling curve.
In section \ref{pullback majors}, we saw that it is possible to pullback this pair of majors to create a sequence of pairs of finite laminations $\Lambda^{\pm}_n$ and a limiting pair of $(t \mapsto d \cdot t)$-invariant laminations $\Lambda^{\pm}$.
Meyer proves that the pullbacks $\Lambda^{\pm}_n$ precisely describe the approximation $\gamma_n$ in the same fashion. 
Moreover, he shows that since $f$ is combinatorially expanding, the approximations $\gamma_n$ converge to a continuous surjection $\gamma\colon S^1\to S^2$ with $f\circ\gamma=\gamma\circ h$ \cite{Quotients}, whose fibers are recorded by $\Lambda^\pm$; thus $S^2=\gamma(S^1)$ arises directly, not as an abstract  quotient.


This means that understanding the finite and easily computable data of $\Lambda^{\pm}_n$ we can get information about the (harder to compute) approximations $\gamma_n$. Recall that in section \ref{connections} we mentioned that the connection at a critical point of $f^2$ that is also a critical point of $f$ may be hard to compute. 
Using this correspondence and the construction of section \ref{connections and decompositions}, once we have the pair of laminations $\Lambda^{\pm}_2$, we can extract the decomposition element $[v]_2$ out of them and use it to draw the connection at $v$ for the second approximation. 
Then, we can follow along the circle $S^1$ and every time we cross an endpoint of a leaf we know that the approximation $\gamma_2$ has to pass through the corresponding critical point.
This is how figure \ref{2nd approximation} was created.


\subsection{Decomposition elements for pullbacks}\label{decompositions for pullbacks}

Since it is a simple numerical calculation, finding out the decomposition element $[v]_n$ can easily be done by a computer by just pulling back the majors $n$ times and seeing how the corresponding decomposition element looks like.

If all critical points of $f$ are preperiodic, their degree under iterates of $f$ is bounded, and thus
finitely many iterates are enough to know the connection at all vertices at all stages.
Things become more tricky though when some critical point $v$ is periodic.
Then, $v$ is a critical point of all iterates $f^n$ and its multiplicity grows, i.e. at each subdivision stage more and more white and black tiles appear around it.
This would require an infinite number of pullbacks to know the connection at $v$ at all stages.

Luckily, lemma \ref{lemma} helps us understand these decomposition elements given only the data of the pair of majors.


\begin{restatable}{lem}{MainLemma}\label{lemma}
    Suppose $a \in \crit(f)$ with $\deg_f(a) = k$ and $f(a) = b \in \crit(f)$. 
    Then $[a]_2$ consists exactly from $[a]_1$ joint with $k$ copies of $[b]_1$, each sharing an endpoint with a vertex of $[a]_1$.
\end{restatable}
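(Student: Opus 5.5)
We will work entirely on the circle side, using the pullback description of $\Lambda^{\pm}_n$ from Proposition \ref{pullback majors}: $\Lambda^{\pm}_2$ is $\Lambda^{\pm}_1$ together with all $h$-preimages of the leaves of $\Lambda^{\pm}_1$, chosen so that no two leaves cross. We also use Meyer's correspondence between $\Lambda^{\pm}_2$ and the connections of $\gamma_2$. The decomposition element $[a]_2$ is the union of all leaves of $\Lambda^{\pm}_2$ that are joined to $[a]_1$ through chains of shared endpoints. So the task is to identify exactly which pulled-back leaves attach to $[a]_1$.

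First, let $x_0,\dots,x_{k-1}$ be the vertices of $[a]_1$. By the construction in Section \ref{connections and decompositions}, these are the $k$ parameters where $\gamma_1$ passes through $a$. Since $[a]_1$ consists of critical leaves, $h(x_i)=y$ is a single point for all $i$, and $y$ is a parameter at which $\gamma_1$ visits $b=f(a)$, so $y$ is a vertex of $[b]_1$. This follows from the critical portrait matching in step (5) of the unmating procedure.

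Second, we pull back $[b]_1$ under $h$. Each leaf of $[b]_1$ has exactly $d$ disjoint preimage leaves. Near each $x_i$, the map $h$ is a local homeomorphism of the circle, so exactly one preimage copy of the polygon $[b]_1$ has a vertex at $x_i$. Its vertices are the $h$-preimages of the vertices of $[b]_1$ lying in the branch through $x_i$; concretely, they are $x_i+(w-y)/d$ for the vertices $w$ of $[b]_1$. This gives $k$ copies $P_0,\dots,P_{k-1}$ of $[b]_1$, one at each vertex of $[a]_1$.

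We must check that these copies are genuinely disjoint from one another and from $[a]_1$ apart from the shared vertex. Non-crossing follows because $\Lambda^{\pm}_2$ is a lamination. To see that $P_i$ meets no other $x_j$, note that the vertices of $P_i$ lie in an arc of length less than $1/d$ around $x_i$, while the $x_j$ differ from $x_i$ by nonzero multiples of $1/d$.

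Third, and this is the main obstacle, we show that nothing else attaches: no further leaf of $\Lambda^{\pm}_2$ shares an endpoint with $[a]_1\cup\bigcup_i P_i$. A leaf of $\Lambda^{\pm}_2\setminus\Lambda^{\pm}_1$ is a preimage of a leaf of $\Lambda^{\pm}_1$. If it touches a vertex of $P_i$, then its image touches a vertex of $[b]_1$. Since $[b]_1$ is already a full decomposition element of $\Lambda^{\pm}_1$, that image leaf lies in $[b]_1$, so the touching leaf lies in some $P_j$. The same argument applies to leaves touching some $x_i$, since $h(x_i)=y\in[b]_1$. We must also rule out a leaf of $\Lambda^{\pm}_1$ other than $[a]_1$ touching some $P_i$. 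Here we use that distinct critical points give disjoint classes $[c]_1$ in $\Lambda^{\pm}_1$. A vertex of $P_i$ is a point where $\gamma_2$ passes through a preimage of $b$ near $a$. That point is $a$ itself, since it is adjacent along the loop to the visits of $x_i$. It therefore cannot be a vertex of another critical class, except via $[a]_1$, which is already included.

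As an alternative check of this third step, we can verify it on the sphere. The connection of $\gamma_2$ at $a$ has degree $\deg_{f^2}(a)=k\cdot\deg_f(b)$. Let $m=\deg_f(b)$. The element $[a]_2$ should then consist of $km-1$ leaves. Indeed, $(k-1)+k(m-1)=km-1$, so the count matches exactly. Because of this count, any extra attached leaf would give too many leaves, so it suffices to establish the inclusion $[a]_1\cup\bigcup_i P_i\subseteq[a]_2$.
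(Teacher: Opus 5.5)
Your core strategy is the paper's: lift $[b]_1$ through $h$ at each vertex $x_i$ of $[a]_1$ and check that the $k$ lifts are distinct. The step that fails is your description of those lifts.

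A leaf of $\Lambda^\pm_2$ lifting a leaf $\{y,w\}$ of $[b]_1$ is not given by the local inverse branch of $h$ at $x_i$. It lies in a complementary region of the major of whichever lamination contains $\{y,w\}$, and it joins $x_i$ to the preimage of $w$ \emph{in that region}. Such a region meets $S^1$ in general in several arcs of total length $1/d$, separated by critical leaves. So the lifted leaf can jump across a critical leaf and have endpoints much more than $1/d$ apart. Also, when $x_i$ is itself an endpoint of a critical leaf of that lamination, it borders several regions, so there is no canonical ``branch through $x_i$'' at all.

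Hence the formula $x_i+(w-y)/d$ and the claim that $P_i$ lies in an arc of length less than $1/d$ around $x_i$ are unjustified and false in general. Your proof that $P_i$ misses the other $x_j$ rests on them. Similarly, ``exactly one lift at $x_i$'' holds for a different reason than $h$ being a local homeomorphism. It holds because the $d$ preimages of a leaf are pairwise disjoint while each endpoint has exactly $d$ preimages, so every preimage point carries exactly one lift.

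The repair is the paper's argument, which needs no geometry. By that uniqueness of lifts, the lift of $[b]_1$ through $x_i$ is a copy on whose vertices $h$ is a bijection onto the vertices of $[b]_1$. Since $h(x_j)=y$ for all $j$, $P_i$ contains no $x_j$ with $j\neq i$. If $P_i$ and $P_j$ shared a vertex, uniqueness of lifts would force $P_i=P_j\ni x_i,x_j$, a contradiction.

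With that fixed, your counting argument is a real addition, since the paper's proof never addresses ``exactly''. The element $[a]_2=\gamma_2^{-1}(a)$ carries $\deg_{f^2}(a)=km$ points and $km-1$ leaves. The set $[a]_1\cup\bigcup_i P_i$ already accounts for $km$ distinct points and $(k-1)+k(m-1)=km-1$ leaves, so nothing else can attach. This makes the first half of your third step unnecessary, in particular the ``adjacent along the loop'' argument for leaves of $\Lambda^\pm_1$. That argument is not rigorous as written, so you can simply drop it.
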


\begin{proof}
    Let $x,y \in [a]_1$ be endpoints, and $z := h(x) = h(y)$. The point $z$ is well-defined since $[a]_1$ is a critical leaf and $z \in [b]_1$ since $f(a) = b$.
    Thus, there are leaves $\lambda,\mu \in h^{-1}([b]_1)$ with $x \in \lambda$ and $y \in \mu$.
    Note that $\lambda \neq \mu$ since otherwise $h([a]_1)$ would contain the leaf $h(\lambda) = [b]_1$ which is impossible.  
    Since the leaves $\lambda$ and $\mu$ are preimages of $[b]_1$ under $h$ which is a covering map, they are identical copies of $[b]_1$, and the lemma is proven.
\end{proof} 

\begin{corollary}
    The data of the pair of majors $C^{\pm}$ are enough to know the decomposition element $[v]_n$ for all critical points $v$ and all $n$.
\end{corollary}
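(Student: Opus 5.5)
The statement to prove is the Corollary: the majors $C^{\pm}$ determine $[v]_n$ for every critical point $v$ and every $n$. The plan is induction on $n$, using Lemma \ref{lemma} as the engine. For $n=1$ there is nothing to prove: $[v]_1$ is by construction the union of the critical leaves of $C^{+}\cup C^{-}$ attached to $v$, as in Section \ref{connections and decompositions}. The inductive claim I would carry is a generalization of Lemma \ref{lemma} to level $n$. For a critical point $a$ with $\deg_f(a)=k$ and $f(a)=b$, the element $[a]_{n+1}$ should be $[a]_1$ together with $k$ copies of $[b]_n$, one glued at each vertex of $[a]_1$. Each copy is the unique lift under $h$ of $[b]_n$ through that vertex. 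If $b$ is not critical, $[b]_n$ should be read as the element at the postcritical point $b$, which is again determined recursively.

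\emph{Step 1 (the level-$n$ lemma).} I would repeat the proof of Lemma \ref{lemma} with $[b]_1$ replaced by $[b]_n$. The finite lamination $\Lambda^{\pm}_{n+1}$ consists of $C^{\pm}$ together with the $h$-preimages of the leaves of $\Lambda^{\pm}_n$. So a leaf of $\Lambda^{\pm}_{n+1}$ that touches $[a]_1$ but is not in $[a]_1$ is a lift of a leaf of $\Lambda^{\pm}_n$ containing $h(x)\in[b]_1$. The component of the preimage through a vertex $x$ of $[a]_1$ is therefore a homeomorphic copy of the element $[h(x)]_n=[b]_n$. This uses that $h$ is a covering map and that a connected union of finitely many leaves lifts injectively.

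As in the original proof, the copies at distinct vertices $x\neq y$ of $[a]_1$ are distinct. If they coincided, $h$ would map a connected union of leaves containing the critical leaf $[a]_1$ onto $[b]_n$, which is impossible. One must also check that no copy meets $[a]_1$ again, nor meets another copy, away from its attaching vertex. This follows because distinct lifts of a decomposition element are disjoint in the non-crossing lamination $\Lambda^{\pm}_{n+1}$, and $[a]_1$ is the only critical element through those vertices.

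\emph{Step 2 (computability).} Since $f$ is postcritically finite, the orbit $a, f(a), f^2(a),\dots$ visits finitely many points. Step 1 expresses $[a]_{n+1}$ through $[f(a)]_n$ and the finite datum $[a]_1$ read off from $C^{\pm}$. Unwinding the recursion $n$ times terminates, so $[v]_n$ is determined by $C^{\pm}$ alone. This holds even when $v$ is periodic, where the recursion simply cycles.

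The main obstacle I expect is Step 1, specifically showing that the lifts of $[b]_n$ attach \emph{only} at vertices of $[a]_1$, so that $[a]_{n+1}$ contains nothing else. For this I would use that every leaf of $\Lambda^{\pm}_{n+1}$ either lies in $C^{\pm}$ or is a lift. A lift touching the element at $a$ has image touching $[b]_n$. Connectedness then forces the whole lifted component to be one of the $k$ copies through a vertex of $[a]_1$.
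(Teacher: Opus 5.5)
Your proposal is correct and follows the paper's route. The paper's proof is the one-line ``apply Lemma~\ref{lemma} $n-1$ times,'' and your recursion $[a]_{n+1}=[a]_1\cup\{k \text{ lifted copies of } [f(a)]_n\}$, unwound down to level $1$, is exactly what that iteration means. What you add is the explicit level-$n$ form of the lemma, with the checks that the copies are distinct, meet only at their attaching vertices, and exhaust $[a]_{n+1}$. You also treat the case where $f(a)$ is postcritical but not critical, which the paper's lemma, as stated, does not formally cover.
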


\begin{proof}
    Apply lemma \ref{lemma} $n-1$ times.
\end{proof}

Note that, even though we know $[v]_n$ for all $n$, we still don't know exactly how $[v] \in \Lambda^{\pm}$ looks like.
It definitely contains $[v]_{\infty}:=\bigcup_n[v]_n$ but after taking closures there is the possibility that more points get identified with it.

Also note that at this stage it is possible that after some number of steps of adjoining new leaves to $[v]$, a chain of consecutive leaves comes back to itself creating a cycle, obstructing the existence of an infinite chain which is the main interest of this paper.
In lemma \ref{simply connected} though we will see that when $\Lambda^{\pm}$ are known to describe a sphere filling curve this is not possible.

\subsection{Laminations arising from a sphere-filling circle.}\label{decompositions}

It is now time to talk about decompositions arising from laminations that give rise to sphere-filling curves.

Suppose that a sphere-filling map $f:S^1\to S^2$ is given by collapsing a pair of laminations $\Lambda^{\pm}$.
The pair $\Lambda^{\pm}$ gives rise to an equivalence relation on $S^1$, that extends to an equivalence relation of a sphere $\mathcal{S}:=S^1 \sqcup P^{\pm}$, where $P^{\pm}$ are two copies of the plane. 
Thus we obtain a decomposition of the sphere $\mathcal{S}$ and $f$ extends to a quotient map $\hat{f}:\mathcal{S} \to S^2$ see \cite{CaTherineWheels}, section 2 for details.
Moore's theorem \cite{Moore} gives necessary conditions for a decomposition of the sphere $S^2$ to give a quotient homeomorphic to $S^2$.
Here we will only need a partial converse that we quickly prove in lemma \ref{simply connected}.

\begin{lemma}\label{simply connected}
    Suppose that $\mathcal{D}$ is a decomposition of the sphere $\mathcal{S}$, such that $\mathcal{S}/\mathcal{D}$ is homeomorphic to $S^2$. Then, every decomposition element $\xi$ of $\mathcal{D}$ is non-separating, i.e. $\mathcal{S} \smallsetminus \xi$ is connected.
\end{lemma}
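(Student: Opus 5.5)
We argue by contradiction, using the quotient map $\pi\colon \mathcal{S}\to \mathcal{S}/\mathcal{D}\cong S^2$. Suppose some decomposition element $\xi$ separates, so $\mathcal{S}\smallsetminus\xi = U\sqcup V$ with $U,V$ nonempty, disjoint and open in $\mathcal{S}$. The goal is to show that the point $p=\pi(\xi)$ then separates $S^2$. This is impossible, because $S^2$ minus a point is homeomorphic to $\R^2$, which is connected.

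First, $U$ and $V$ are unions of decomposition elements, i.e.\ they are saturated. Any other element $\eta\neq\xi$ is disjoint from $\xi$, so it lies in $U\cup V$. I will show it cannot meet both. Since $\pi^{-1}(\pi(\eta))=\eta$ and the quotient is Hausdorff, $\eta$ is a fiber of a map to a Hausdorff space, hence closed, and so compact. If we also know that each element is connected, then $\eta$ lies entirely in $U$ or entirely in $V$. Connectedness is standard in the setting of Section~2 of \cite{CaTherineWheels}: the elements are closures of chains of consecutive leaves, or the corresponding ideal polygons in $P^{\pm}$, together with the plane pieces. If the paper's decompositions are not assumed to have connected elements, I would add this as a standing hypothesis; it is the usual upper semicontinuous setup of Moore's theorem.

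Second, because $U$ and $V$ are saturated and open, $\pi(U)$ and $\pi(V)$ are open in $S^2$ by the definition of the quotient topology. They are disjoint, nonempty, and their union is $S^2\smallsetminus\{p\}$. Hence $S^2\smallsetminus\{p\}$ is disconnected, contradicting $S^2\smallsetminus\{p\}\cong\R^2$.

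The main obstacle is the saturation step, which needs the elements to be connected. Without connectedness an element could straddle $U$ and $V$, and then the images $\pi(U)$ and $\pi(V)$ would not be disjoint. For that reason I would state explicitly where connectedness comes from (the chain and closure description of elements), or note that it is part of the hypotheses of a Moore-type decomposition.
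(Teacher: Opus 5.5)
Your proof is correct and follows the same route as the paper: both show that $\pi(\xi)$ would be a cut point of $\mathcal{S}/\mathcal{D}\cong S^2$. The paper does this by writing the complement as $\bigsqcup_i \hat f(P_i)$ over the components $P_i$ of $\mathcal{S}\smallsetminus\xi$. Calling that union disjoint tacitly uses your saturation step. Your connectedness hypothesis is also genuinely needed: fold $S^2$ across the equator onto a closed disk, then collapse the boundary circle. This gives a decomposition with quotient $S^2$ in which the equator is a separating element, so you are right to make the hypothesis explicit.
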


\begin{proof}
    Suppose that there is an element $\xi \in \mathcal{D}$ that is separating, and let $P_i$ be the components of its complement.
    Let $\hat{f}:\mathcal{S} \to S^2 / \mathcal{D}$ be the quotient map.
    Then, $\mathcal{S}/\mathcal{D} \smallsetminus \hat{f}(\xi) = \bigsqcup_i \hat{f}(P_i) $ is disconnected.
    In particular, $\hat{f}(\xi)$ is a cut point and thus $\mathcal{S}/\mathcal{D}$ is not homeomorphic to $S^2$.
\end{proof}

Going back to the pair of laminations $\Lambda^{\pm}$, lemma $\ref{simply connected}$ implies the following:

\begin{corollary}
    The pair of laminations $\Lambda^{\pm}$ contains no cycles that consist of chains of perfect fits or limits of those.
\end{corollary}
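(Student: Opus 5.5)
The plan is to deduce the corollary directly from Lemma \ref{simply connected}, by viewing a chain of perfect fits inside $\mathcal{S} = S^1 \sqcup P^{\pm}$ as a path in the closure of the union of its leaves. Each leaf of $\Lp$ is realised as a geodesic (an arc) in the disk $P^{+}\cup S^1$, and each leaf of $\Lm$ as an arc in $P^{-}\cup S^1$. All leaves in one chain of perfect fits are identified by the equivalence relation, so the union of these arcs, together with their closure, lies in a single decomposition element $\xi$ of $\mathcal{D}$. Here I use the convention from \cite{CaTherineWheels} that the decomposition of $\mathcal{S}$ includes the convex hulls of the classes.

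Now suppose there is a cycle: a chain $\ell_1,\dots,\ell_m$ with $m\ge 2$ distinct leaves, alternately red and blue, in which $\ell_m$ shares an endpoint with $\ell_1$. Concatenating the arcs gives a closed curve $\sigma\subset\xi$. I will check that $\sigma$ is a Jordan curve. It passes through $S^1$ only at the shared endpoints. Its red arcs lie in the open hemisphere $P^{+}$ and its blue arcs in $P^{-}$. Two arcs of the same color are disjoint except possibly at endpoints, because the leaves of a lamination are unlinked. A cycle must revisit no point, so I pass to a minimal subcycle first. By the Jordan curve theorem, $\mathcal{S}\smallsetminus\sigma$ then has two components. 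Each component contains points of $S^1$: for example, a red leaf cuts $S^1$ into two arcs, and $\sigma$ meets $S^1$ in only finitely many points, so both complementary components meet $S^1$.

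Next I will argue that $\xi$ separates. Since $\sigma\subset\xi$, I need to know that points of $S^1$ lie in distinct components of $\mathcal{S}\smallsetminus\xi$, and not merely of $\mathcal{S}\smallsetminus\sigma$. Choose points $p,q\in S^1\smallsetminus\xi$ on opposite sides of $\sigma$; this is possible because each class is a proper closed subset of $S^1$, which is where surjectivity of $\gamma$ with nontrivial fibers is used. Then $p$ and $q$ lie in different components of $\mathcal{S}\smallsetminus\xi$, because any path joining them in $\mathcal{S}\smallsetminus\xi$ avoids $\sigma$. Hence $\xi$ is separating, which contradicts Lemma \ref{simply connected} since $\mathcal{S}/\mathcal{D}\cong S^2$.

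For limits of chains, the plan is to take a sequence of chains whose closure forms a closed loop in the Hausdorff sense. The closed set $\xi$ then contains a limit continuum that still separates two points of $S^1$, and it suffices to exhibit such a separating pair. The main obstacle is this limit case: the limiting closed curve may fail to be a Jordan curve, since leaves may degenerate. I would handle it by working with Hausdorff limits of the hulls and using that separation of two fixed boundary points $p,q$ is stable under the closure. Concretely, $p$ and $q$ lie in different components of the complement of the limit set whenever every approximating curve winds around one of them and not the other, and this would be verified via the cyclic order of endpoints on $S^1$.
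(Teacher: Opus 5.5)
Your approach is the paper's. Its whole proof is the single line that a cycle of perfect fits yields a decomposition element of $\mathcal{S}$ that is not simply connected, hence separating, contradicting Lemma~\ref{simply connected}; your Jordan curve $\sigma\subset\xi$ unpacks that line.

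The step that does not go through as you justify it is the choice of $p,q\in S^1\smallsetminus\xi$ on opposite sides of $\sigma$. That the class $K=\xi\cap S^1$ is a proper closed subset of $S^1$ only produces \emph{some} point outside $K$. Nothing you say prevents $K$ from containing $U\cap S^1$ for one component $U$ of $\mathcal{S}\smallsetminus\sigma$, and then you have no point of $\mathcal{S}\smallsetminus\xi$ in $U$ to separate. (This is exactly what is hidden in the paper's ``not simply connected and thus it is separating''.) What you need is that $K$ contains no nondegenerate arc of $S^1$. This follows from the semiconjugacy. If $\gamma$ were constant on an arc $I$, then $\gamma\circ h^{n}=f^{n}\circ\gamma$ would be constant on $I$, i.e.\ $\gamma$ would be constant on $h^{n}(I)$. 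That set is all of $S^1$ once $d^{n}|I|>1$, contradicting surjectivity. Since $U\cap S^1$ is a nonempty open subset of $S^1$ for each side $U$, it then contains points outside $K$.

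Two smaller repairs.

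\textbf{Both sides meet $S^1$.} Your reason for this is not an argument. The correct one uses the colors. A component avoiding $S^1$ lies in one open hemisphere, so its frontier, which is all of $\sigma$, lies in one closed hemisphere. That is impossible because $\sigma$ contains arcs of both colors. This is where alternation is genuinely used: the boundary of an ideal polygon class is a monochromatic loop bounding a region inside one hemisphere and inside $\xi$. It is also why you should prune to a contiguous sub-loop between two visits to the same point, which still alternates.

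\textbf{The limit case.} The paper does not treat this separately either, and in the basic situation you do not need Hausdorff limits. Suppose $\ell_1=\{x_0,x_1\},\ell_2=\{x_1,x_2\},\dots$ is an infinite chain with $x_n\to x_0$. Either some point repeats, giving a finite cycle, or $\{x_0\}\cup\bigcup_n\ell_n$ is already a Jordan curve in $\xi$. This holds because geodesics with both endpoints near $x_0$ lie near $x_0$ in the closed disk, and $x_0\in K$ since $K$ is closed. The finite argument then applies verbatim.
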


\begin{proof}
    Indeed, such a chain would give rise to a decomposition element on the sphere $\mathcal{S}$ that is not simply connected and thus it is separating.
\end{proof}

Therefore, using lemmas \ref{lemma} and \ref{simply connected} we can now inductively combinatorially describe the decomposition elements $[v]_n$ for all $n$, where $v$ is a critical point of $f$, when the associated laminations describe approximations to a sphere filling curve.
As a consequence of this description, we can now prove corollary \ref{a and b}, which we recall from the introduction:

\CorAB*

\begin{proof}
    By lemma \ref{lemma}, the element $[a]_2$ contains a leaf of $\Lambda^+$ that is joint with leaves of $\Lambda^-$ (coming form $[b]_1$) on either side. 
    Inductively, we can see that $[a]_n$ contains alternating chains of length $\geq n$ and thus, since $[a]_n \subseteq [a]_{\infty}\subseteq [a]$ for all $n$, $[a]$ contains an infinite chain.
\end{proof}

Before discussing an example of an expanding Thurston map for which the conditions of corollary \ref{a and b} hold, we will conclude our running example by exploring how the decomposition elements for its critical points look like.
First, we re-draw the ramification portrait of section \ref{mating} like in figure \ref{portrait subdivision}:
\begin{figure}[h!]
    \centering
    \includegraphics[width=0.4\linewidth]{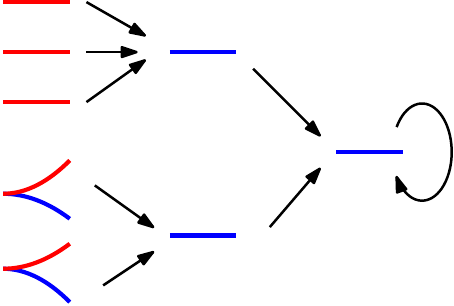}
    \caption{The ramification portrait of the barycentric subdivision, in terms of leaves.}
    \label{portrait subdivision}
\end{figure}

Using the process implied by lemma \ref{lemma} and lemma \ref{simply connected}, we can create the decomposition elements $[b]_1,[b]_2,[b]_3$, shown in figure \ref{3steps}.
\begin{figure}[h!]
    \centering
    \includegraphics[width=0.6\linewidth]{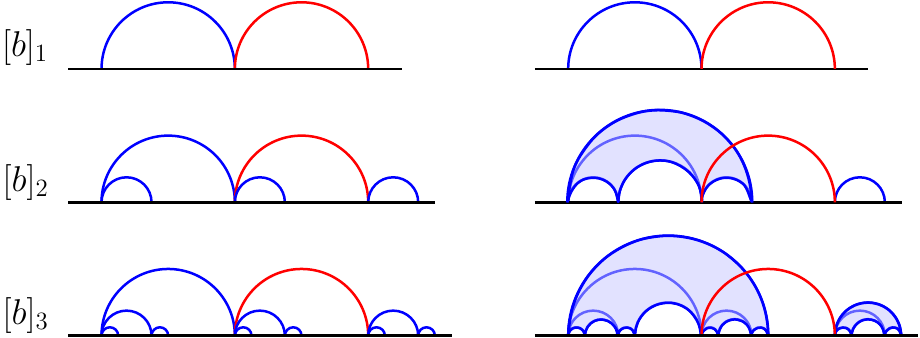}
    \caption{The decomposition elements $[b]_i$ for $i = 1,2,3$. (Left) in terms of leaves. (Right) In terms of ideal polygons.}
    \label{3steps}
\end{figure}

Applying the same process for all critical points of $f$ and translating the decomposition elements to connections as described in section \ref{connections and decompositions}, in figure \ref{laminations ramification} we show the connections at the corresponding vertices will look like for the third approximation for the sphere-filling curve for $f$, this time without pulling back the whole $\Lambda^{\pm}_2$.

\begin{figure}[h!]
    \centering
    \includegraphics[width=0.7\linewidth]{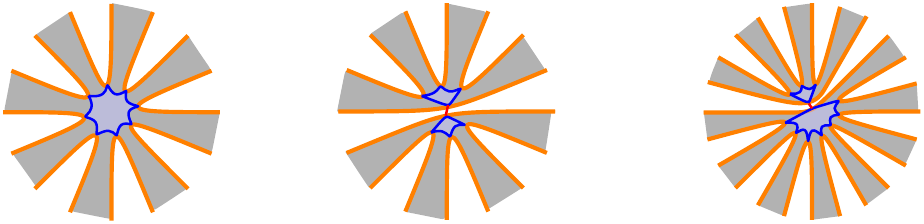}
    \caption{The connections at step 3 at the points (Left) $1,-1,\infty$, (Middle) $a,-a,0$ and (Right) $b,-b$.}
    \label{laminations ramification}
\end{figure}

Note that for this example we can easily see that, even though all critical points have eventually unbounded ramification, at stage $n$ the corresponding decomposition elements consist of two large blue polygons and at most one red leaf joining them.

\section{Infinite chains}\label{example}

We conclude by describing an expanding Thurston map for which some decomposition elements in the corresponding $\Lambda^{\pm}$ contain infinite chains of perfect fits and therefore give a proof of theorem \ref{maintheorem}.


\subsection{The example} 

Consider the branched covering of $S^2$ described in figure \ref{The map}. 

\begin{figure}[h!]
    \centering
    \includegraphics[width=0.6\linewidth]{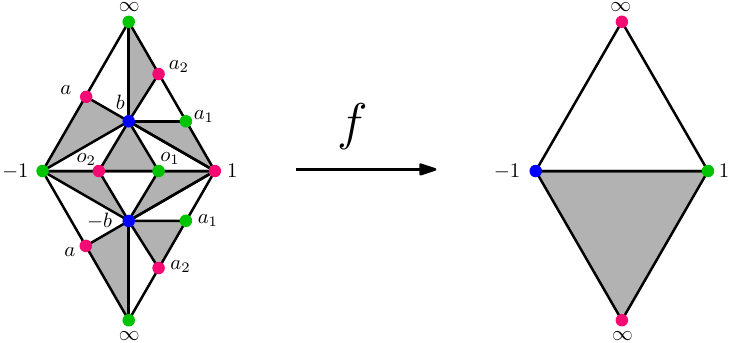}
    \caption{The map $f$.}
    \label{The map}
\end{figure}
It is a postcritically finite map $f:S^2 \to S^2$ of degree 8 and it is combinatorially expanding, as can be seen from figure \ref{2nd subdivision}. 

\begin{figure}[h!]
    \centering
    \includegraphics[width=0.18\linewidth]{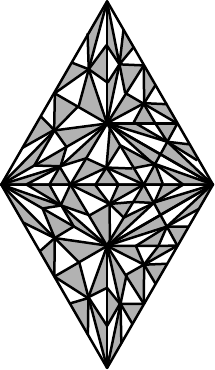}
    \caption{The second subdivision that comes from $f$. It is clear that the mesh size of the tiling goes to 0.}
    \label{2nd subdivision}
\end{figure}


The set of critical points of $f$ is: $\operatorname{crit}(f) = \{b,-b,a,a_1,a_2,o_1,o_2,1,-1,\infty\}$ and all of them have degree 2 except of $b,-b$ that have degree 4.
The set of postcritical points of $f$ is $\operatorname{post}(f) = \{1,-1,\infty\}$
\footnote{Here as in section \ref{mating}, the labels ``$1$'', ``$-1$'', ``$\infty$'' mean nothing more than labeling the points.}.
In particular, the ramification portrait of $f$ is the following:
\begin{center}
\begin{tikzcd}
b \arrow[rd, "4"]  &                   & a_1 \arrow[d, "2"']         &                                  & a \arrow[ld, "2"']   \\
                   & -1 \arrow[r, "2"] & 1 \arrow[r, "2", bend left] & \infty \arrow[l, "2", bend left] & o_2 \arrow[l, "2"']  \\
-b \arrow[ru, "4"] &                   & o_1 \arrow[u, "2"]          &                                  & a_2 \arrow[lu, "2"']
\end{tikzcd}
\end{center}

Since $|\operatorname{post}(f)| =3$, $f$ can be represented by a holomorphic map $g:\C \mathbb{P}^1 \to \C\mathbb{P}^1$, but this $g$ can't be chosen to be expanding for the same reason as the barycentric subdivision map, see \cite{Book} chapters 7 and 12 for details.


Consider also the Eulerian circuit $\gamma$ of figure \ref{The curve}.
\begin{figure}[h!]
    \centering
    \includegraphics[width=0.5\linewidth]{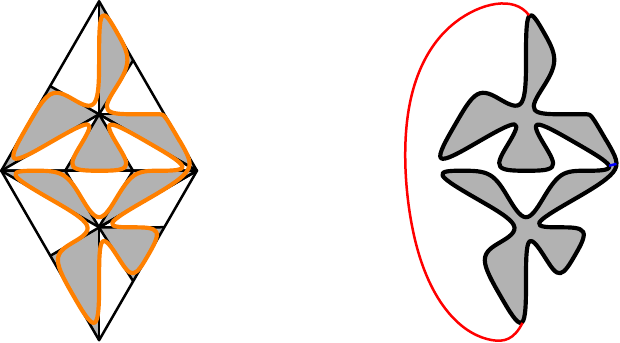}
    \caption{(Left) An Eulerian circuit for $f$. (Right) The leaves $[1]_1,[\infty]_1$.}
    \label{The curve}
\end{figure}
Note that for this choice of $\gamma$, the decomposition element $[1]_1$ is a single blue leaf, and $[\infty]_1$ is a single red leaf.
Therefore, focusing on this particular part of the ramification portrait, we get the portrait of figure \ref{1 and infty}.
\begin{figure}[h!]
    \centering
    \includegraphics[width=0.25\linewidth]{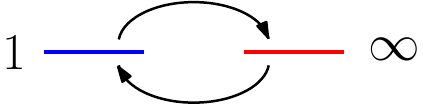}
    \caption{The critical points 1 and $\infty$ make up a cycle of length 2.}
    \label{1 and infty}
\end{figure}

Using corollary \ref{a and b}, it is straightforward that the decomposition element $[\infty] \in \Lambda^{\pm}$ contains an infinite chain of perfect fits. 
The decomposition elements $[\infty]_i$ for $i = 1,\dots,5$ and the corresponding connections at $\infty$ are shown in figure \ref{5 steps}.


\begin{figure}[h!]
    \centering
    \includegraphics[width=0.7\linewidth]{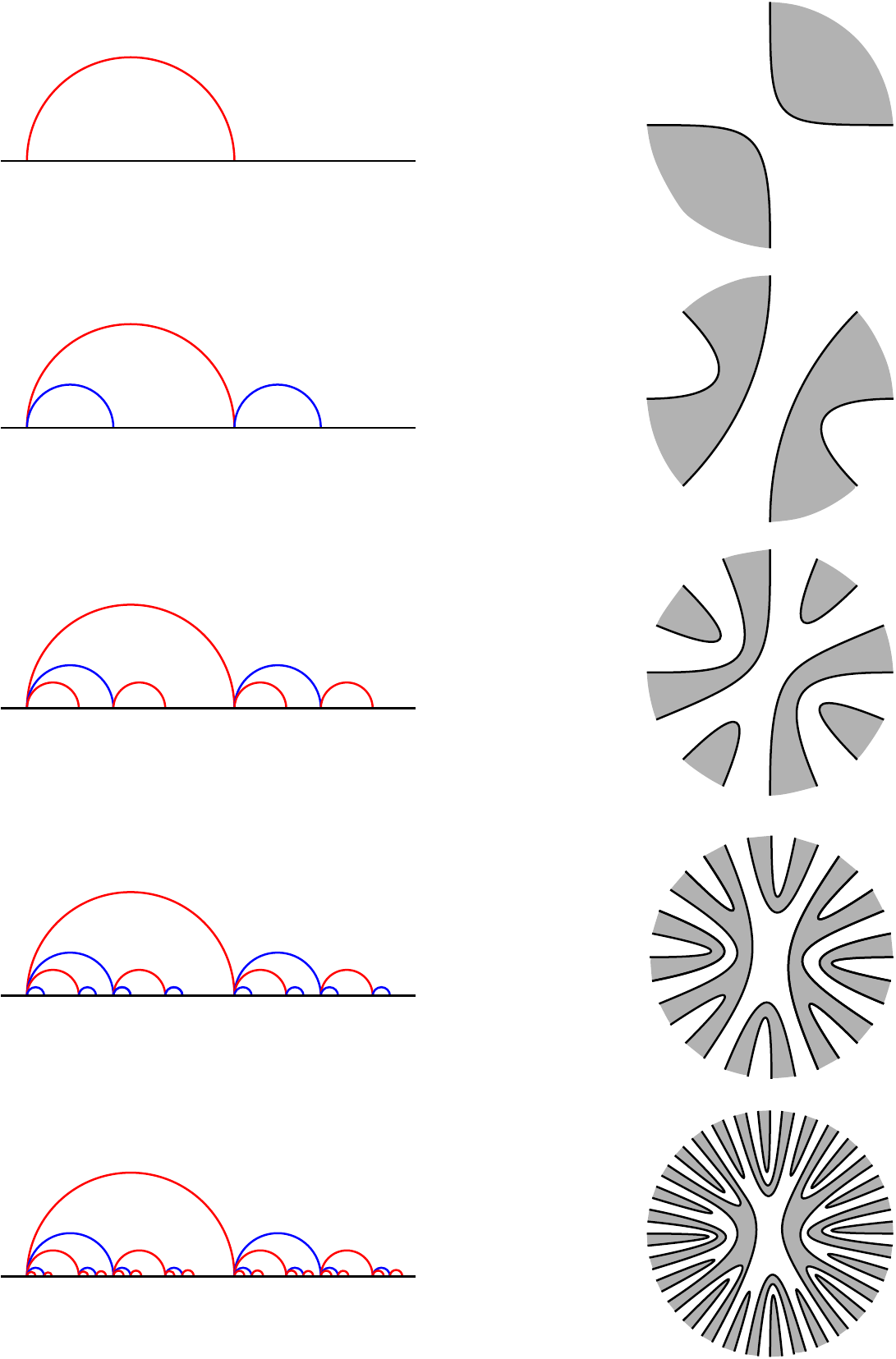}
    \caption{(Left) The decomposition elements $[1]_i, \ i =1,\dots,5$. (Right) The associated connections at $1$.}
    \label{5 steps}
\end{figure}

\bibliographystyle{alpha}
\bibliography{uhgeuir.bib}

\end{document}